\documentclass[12pt]{article}
\usepackage{setspace}
\usepackage{amsmath,amsthm,amssymb,mathtools}
\usepackage{dsfont}
\usepackage{array}
\usepackage{ytableau}
\ytableausetup{boxsize=1.55em,aligntableaux=center}

\usepackage{graphicx}
\usepackage{tikz}
\usepackage{tikz-cd}
\usetikzlibrary{calc}

\usepackage[margin=1in]{geometry}
\usepackage[dvipsnames]{xcolor}
\usepackage[hidelinks]{hyperref}
\usepackage{enumitem}

\newcommand{\Gr}{\operatorname{Gr}}
\newcommand{\Aut}{\operatorname{Aut}}

\newtheorem{theorem}{Theorem}
\newtheorem{corollary}[theorem]{Corollary}
\newtheorem{lemma}[theorem]{Lemma}
\newtheorem{prop}[theorem]{Proposition}

\theoremstyle{definition}
\newtheorem{definition}[theorem]{Definition}

\begin{document}

\title{\vspace{-0.5em}\Large There are no unexpected toric positroid varieties}
\author{\normalsize Avery McNeill}
\date{}

\maketitle
\vspace{-2.7em}

\begingroup
\renewcommand{\thefootnote}{}
\footnotetext{Date: October 5, 2026.

The author was partially supported by NSF DMS-1945212.}
\endgroup

\begin{abstract} Let $\mathcal{M}$ be a positroid. We prove that the positroid variety $\Pi_\mathcal{M}$ is toric with respect to some algebraic torus if and only if $\mathcal{M}$ is regular. In that case, $\Pi_\mathcal{M}$ is toric with respect to the standard diagonal torus. Thus, there are no unexpected toric positroid varieties.

\end{abstract}

\vspace{.4\baselineskip}
\noindent\hspace*{1em}\begin{minipage}{\dimexpr\linewidth-2em\relax}
\small\noindent\textbf{Note added in revision.} \textit{After this paper was posted to arXiv, the author learned that Gonzales, Le, and Lowen established Theorem 1 in \cite[Theorem~B.5]{GLL26}, using a similar approach. The proof presented here was developed independently of their work.}
\end{minipage}\par\vspace{.7\baselineskip}

Richardson varieties in the flag variety and positroid varieties in
the Grassmannian are closely related: in type $A$, positroid varieties
are projected Richardson varieties; see \cite{KLS14} and
\cite{Speyer23}, and see also \cite{BC12} on
projection varieties. Recall that an algebraic torus is an algebraic group
isomorphic to $(\mathbb C^\times)^d$ for some $d\geq 0$, and that a
toric variety is a normal irreducible variety admitting an effective
algebraic torus action with a dense open orbit. More generally, if an
algebraic torus $T$ acts on an irreducible variety $X$, we say that
$X$ is \emph{toric with respect to $T$} if the image of
$T$ in $\Aut(X)$ has a dense open orbit. The classification of toric Richardson varieties has recently attracted further attention.

Richardson varieties that are toric with respect to the standard torus have been characterized in type $A$ by Tsukerman and Williams \cite[Proposition~4.12]{TW15} and in arbitrary type by Can and Saha \cite[Theorem~4.7]{CanSaha}. More recently, Gorsky, Kim, and Sherman-Bennett \cite[Theorem~1.2]{GKSB26} characterized, in type $A,$ the Richardson varieties that are toric with respect to an arbitrary algebraic torus: these are precisely the Richardson varieties for which the associated Bruhat interval $[v,w]$ contains no subinterval isomorphic to $S_3.$ They call a toric Richardson \textit{unexpected} when its dimension exceeds $n-1.$ Such a variety is not toric with respect to the standard diagonal torus. They also proved that every projection of such a Richardson variety to the associated positroid variety in the Grassmannian is toric with respect to the standard torus \cite[Theorem~3.15]{GKSB26}. Thus one could say ``unexpected toric Richardson varieties project to expected toric positroid varieties." This raises the question: ``Are there any unexpected toric positroid varieties?"

We answer this question in the negative. More precisely, a positroid variety is toric with respect to some algebraic torus if and only if its indexing positroid is regular; in that case, it is already toric with respect to the standard diagonal torus. Our main result is the following:
\enlargethispage{3\baselineskip}

\begin{theorem}\label{thm:main}
Let $\mathcal{M}$ be a positroid and let
$\Pi_{\mathcal{M}}\subseteq \Gr(k,n)$ be the corresponding positroid
variety over $\mathbb{C}.$ The following are equivalent:
\begin{enumerate}[topsep=0pt,itemsep=0pt,parsep=0pt]
\item $\Pi_\mathcal{M}$ is toric with respect to some algebraic torus.
\item $\mathcal{M}$ is a regular matroid.
\item $\Pi_{\mathcal{M}}$ is toric with respect to the standard diagonal
torus $T_{\mathrm{std}}.$
\end{enumerate}
\end{theorem}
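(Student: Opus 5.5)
The plan is to prove $(3)\Rightarrow(1)\Rightarrow(2)\Rightarrow(3)$. The implication $(3)\Rightarrow(1)$ is immediate, since $T_{\mathrm{std}}$ is an algebraic torus.

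\emph{Step for $(2)\Rightarrow(3)$.} I will work on the open positroid variety $\Pi^\circ_\mathcal{M}$, which is dense in $\Pi_\mathcal{M}$. For a point $V$ in the matroid stratum of $\mathcal{M}$, the $T_{\mathrm{std}}$-orbit of $V$ has dimension $n-c(\mathcal{M})$, where $c(\mathcal{M})$ is the number of connected components of $\mathcal{M}$. This holds because the stabilizer of $V$ in $T_{\mathrm{std}}$ is the subtorus that is constant on each connected component. Hence $\Pi_\mathcal{M}$ is toric for $T_{\mathrm{std}}$ if and only if $\dim \Pi_\mathcal{M} = n-c(\mathcal{M})$. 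Both sides are additive under direct sums. Positroid varieties of direct sums are products, up to the cyclic interleaving, which does not affect dimensions. So I may assume $\mathcal{M}$ is connected and must show $\dim \Pi_\mathcal{M}=n-1$.

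For this I use the plabic graph (equivalently, Le-diagram) description: $\dim\Pi_\mathcal{M}$ equals the number of faces of a reduced plabic graph minus one. I will show that a connected regular positroid has a reduced plabic graph whose faces number exactly $n$, i.e.\ the graph is a tree-like series–parallel network with no internal faces. The argument uses the excluded-minor characterization. A connected positroid with no $U_{2,4}$-minor is obtained by series and parallel extensions from $U_{1,2}$, and each such extension adds one boundary face. (Binary positroids are regular, since $F_7$ and $F_7^*$ are not positroids.) Thus the graph has $n$ faces, giving $\dim\Pi_\mathcal{M}=n-1$. Normality of $\Pi_\mathcal{M}$ is known by Knutson--Lam--Speyer \cite{KLS14}, so $\Pi_\mathcal{M}$ is a toric variety in the strict sense.

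\emph{Step for $(1)\Rightarrow(2)$, the main obstacle.} Suppose $\mathcal{M}$ is not regular. Then it has a $U_{2,4}$-minor, since regular matroids are exactly those with no $U_{2,4}$, $F_7$, or $F_7^*$ minor, and minors of positroids are positroids. I would argue by contradiction using $\mathbb{F}_q$-point counts, which are available because positroid varieties are defined over $\mathbb{Z}$. If a torus $T$ acts on $\Pi_\mathcal{M}$ with a dense orbit, then, since $\Pi_\mathcal{M}$ is normal, it is a projective toric variety. Its point count is therefore $\sum_\sigma (q-1)^{\dim O_\sigma}$, a polynomial in $q-1$ with nonnegative coefficients whose $(q-1)^{d-1}$-coefficient counts $T$-invariant prime divisors. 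The point count of $\Pi_\mathcal{M}$ is also computed by the positroid stratification as $\sum_{\mathcal{M}'\le\mathcal{M}} \widetilde R_{\mathcal{M}'}(q)$, using Kazhdan--Lusztig $R$-polynomials of the corresponding Bruhat intervals. Here I would invoke the projected-Richardson picture \cite{KLS14}: $\Pi^\circ_\mathcal{M}$ is isomorphic to an open Richardson $R^\circ_{v,w}$ for a suitable interval $[v,w]$, and a $U_{2,4}$-minor forces an $S_3$-subinterval there. An $S_3$-subinterval makes $R_{v,w}$ differ from the pure torus count $(q-1)^{\ell(w)-\ell(v)}$. This is the mechanism behind \cite[Theorem~1.2]{GKSB26}.

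The delicate part is turning this discrepancy into a contradiction for the \emph{closed} variety under an \emph{arbitrary} torus $T$, since the $T$-orbits need not be compatible with the positroid strata. If the point-count bookkeeping does not close, my fallback is to compare invariants intrinsic to $\Pi_\mathcal{M}$. Both the toric boundary and the positroid boundary $\partial\Pi_\mathcal{M}$ are reduced anticanonical divisors \cite{KLS14}. I would then compare the number of irreducible components and the class group. A projective toric variety of dimension $d$ with $r$ boundary divisors has $\operatorname{rank}\mathrm{Cl} = r-d$. The positroid side can be computed from the number of positroid divisors, which exceeds $\dim+1$ once a $U_{2,4}$-minor is present. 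I expect this comparison, done carefully, to be the main technical work of the proof.
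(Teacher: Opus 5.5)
Your $(3)\Rightarrow(1)$ is immediate, and your reduction of $(2)\Rightarrow(3)$ to $\dim\Pi_{\mathcal M}=n-c(\mathcal M)$ is sound. However, $(1)\Rightarrow(2)$ has a genuine gap, which you flag yourself, and neither of your mechanisms can close it. Cohomological invariants do not detect non-toricity here. Take the nonregular positroid $\mathcal M=U_{2,4}\oplus U_{1,2}$ on $[6]$ with blocks $\{1,2,3,4\}$ and $\{5,6\}$. Then $\Pi_{\mathcal M}\cong\Gr(2,4)\times\mathbb P^1$, with point count $(q^4+q^3+2q^2+q+1)(q+1)=(q^2+q+1)(q^3+q^2+q+1)$. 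This is exactly the count of the toric variety $\mathbb P^2\times\mathbb P^3$. The same holds for $E$-polynomials, which by Katz's theorem is all that comparing the two $\mathbb Z$-models can see. So the $S_3$-discrepancy on the open stratum cannot be converted into a contradiction for the closed variety by counting. The fallback fails as well. The claim that a $U_{2,4}$-minor forces more than $\dim+1$ positroid divisors is false: $\Gr(2,4)$ has exactly $4$ of them, and $\Gr(2,4)\times\mathbb P^1$ has $6=\dim+1$. More fundamentally, two reduced anticanonical divisors on the same variety need not have the same number of components. The positroid boundary and a hypothetical toric boundary therefore just give two compatible descriptions of $\operatorname{rank}\mathrm{Cl}$ (for $\Gr(2,4)$ both are consistent with rank $1$), and no contradiction.

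The missing idea is to localize to a closed subvariety that is stable under the \emph{unknown} torus and is visibly not toric. The paper does this in three steps.

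First, $\Pi_{\mathcal M}$ is a complete toric variety for $H$, so by Demazure's theorem $\Aut^0(\Pi_{\mathcal M})$ is a connected linear algebraic group in which $H$ is a maximal torus. After conjugation, $T_{\mathrm{std}}\subseteq H$.

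Second, $H$ then commutes with cocharacters $\lambda_a$ of $T_{\mathrm{std}}$ that are strongly dominant for the cyclically shifted coordinate flags. So $H$ fixes the finite sets $\Pi_{\mathcal M}^{\lambda_a}$ pointwise and preserves the Bia{\l}ynicki-Birula cells $\Pi_{\mathcal M}\cap A^{(a)}_I$. Since open positroid varieties are intersections of such cells, every closed positroid subvariety is $H$-stable.

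Third, a $U_{2,4}$-minor lifts, via the face $e_C+\Gamma_{U_{2,4}}$ of $\Gamma_{\mathcal M}$ with $C$ a basis of the contracted set, to a closed positroid subvariety $\iota_C(\Gr(2,4))$. Being $H$-stable, it would itself be toric. This is impossible, since $\dim\Gr(2,4)=4$ exceeds the rank $3$ of $\Aut^0(\Gr(2,4))\cong\mathrm{PGL}_4(\mathbb C)$.

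Separately, the combinatorial core of your $(2)\Rightarrow(3)$ is under-justified. Excluding $F_7$ and $F_7^*$ only gives binary $\Rightarrow$ regular. To get series--parallel you must also rule out $M(K_4)$-minors; this is true, since $M(K_4)$ is not a positroid, but it needs proof. Also, ``each extension adds one boundary face'' is asserted without proof. It is not automatic, because a series or parallel partner need not be cyclically adjacent in a connected positroid. The paper bypasses plabic graphs: Quail--Rombach give $\Pi^\circ_{\mathcal M}(\mathbb C)=S_{\mathcal M}(\mathbb C)$, and projective uniqueness of representations of binary matroids makes this a single $T_{\mathrm{std}}$-orbit.
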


Note that toricity of positroid varieties with respect to the standard torus admits some characterizations, including a moment-polytope criterion given by Tsukerman and Williams \cite[Proposition~7.12]{TW15} and the equivalent plabic-forest criterion of {\L}ukowski, Parisi, and Williams \cite[Propositions~3.15--3.16]{LPW23}.

Throughout, $\operatorname{Gr}(k,n)$ denotes the complex Grassmannian, with its usual
$\mathrm{GL}_n(\mathbb{C})$-action. Since scalar matrices act trivially, this action
factors through $\mathrm{PGL}_n(\mathbb{C})$; we write
\[
T_{\mathrm{std}}=(\mathbb{C}^{\times})^n/\mathbb{C}^{\times}
\]
for the image of the diagonal torus, and, when a positroid variety
$X\subseteq\operatorname{Gr}(k,n)$ is fixed, also for the image of its restricted
action in $\operatorname{Aut}(X)$.

\begin{definition}[Positroids, positroid varieties, and regularity]
Let $\mathcal M$ be a rank-$k$ matroid on $[n]$, and write
$\mathcal B(\mathcal M)\subseteq\binom{[n]}k$ for its set of
bases. For $I\in\binom{[n]}k$, let $p_I$ denote the
corresponding Pl\"ucker coordinate. For a field $\mathbb F$ and
a point $V\in\Gr_{\mathbb F}(k,n)$, let $\mathcal M_V$ be the
rank-$k$ matroid whose bases are the sets $I$ for which
$p_I(V)\neq0$.

A rank-$k$ matroid $\mathcal M$ on $[n]$ is a
\emph{positroid} if $\mathcal M=\mathcal M_V$ for some
$V\in\Gr_{\mathbb R}(k,n)$ represented by a full-rank real matrix
whose maximal minors are all nonnegative. For a positroid
$\mathcal M$, its \emph{positroid variety} is
\[
\Pi_{\mathcal M}
=
V\bigl(p_I:I\notin\mathcal B(\mathcal M)\bigr)
\subseteq\Gr(k,n).
\]
By \cite[Theorem~5.15]{KLS13}, these Pl\"ucker coordinates generate
the scheme-theoretic defining ideal of $\Pi_{\mathcal M}$. We write
$\Pi_{\mathcal M}^{\circ}$ for the open positroid variety indexed by
$\mathcal M$, whose closure is $\Pi_{\mathcal M}$. A matroid is \emph{regular} if it is representable over every field. \end{definition}

The proof of Theorem~\ref{thm:main} proceeds as follows. We first conjugate an arbitrary acting torus so that it contains the image of the standard diagonal torus. We then show that a nonregular positroid gives rise to a closed, torus-stable positroid subvariety isomorphic to $\Gr(2,4)$, which shows that the corresponding positroid variety cannot be toric with respect to any algebraic torus. Finally, we use projective uniqueness of representations of regular matroids to prove that every regular positroid variety is toric with respect to $T_{\mathrm{std}}$.

We begin by placing the acting torus in a convenient position.

\begin{lemma}\label{lem:normalization}
Let $X=\Pi_{\mathcal M}$ be toric with respect to an algebraic torus
$H$. Then the image of $H$ in $\Aut(X)$ is conjugate in $\Aut^0(X)$ to a torus containing
$T_{\mathrm{std}}$.

\end{lemma}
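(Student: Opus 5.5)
The plan is to move the problem into the connected linear algebraic group $G=\Aut^0(X)$ and then use that maximal tori of $G$ are conjugate. First, $X=\Pi_{\mathcal M}$ is a projective variety (a closed subvariety of $\Gr(k,n)$). Hence $\Aut(X)$ is a group scheme locally of finite type, and its identity component $\Aut^0(X)$ is a connected algebraic group. Let $H'$ be the image of $H$ in $\Aut(X)$. It is again an algebraic torus, being a quotient of a torus. It is connected, so $H'\subseteq G$. Likewise $T_{\mathrm{std}}$, the image of the diagonal torus acting on $X$, is a connected torus inside $G$.

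The key structural input is that $G$ is a \emph{linear} algebraic group. I would get this from the fact that $X$ is toric with respect to $H'$. Then $X$ contains the dense open orbit $O\cong H'/\mathrm{Stab}$, which is a torus. By Rosenlicht/Chevalley structure theory, $G$ is an extension of an abelian variety $A$ by a linear group. If $A$ were nontrivial, its action would be visible through the Albanese of (a resolution of) $X$. But $X$ is rational, being the closure of an orbit of a torus, so every abelian variety acting on it acts trivially. Concretely, I would argue as follows. The action of $G$ on $X$ lifts to its normalization $\tilde X$, which is a projective toric variety for $H'$. The connected automorphism group of a complete toric variety is known to be linear (Demazure/Cox). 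The restriction map $G\to \Aut^0(\tilde X)$ is injective because $\tilde X\to X$ is birational. Hence $G$ is linear.

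Alternatively, and more cheaply, I would use that $\Aut^0(X)$ acts on $\mathrm{Pic}$ and preserves the very ample line bundle $\mathcal O(1)$ restricted from the Plücker embedding, since $\mathrm{Pic}^0$-translation is trivial for a rational $X$. Then $G$ acts linearly on $\mathbb P(H^0(X,\mathcal O(1))^\vee)$, so $G$ embeds in a $\mathrm{PGL}$ and is affine. I regard this linearity step as the main obstacle. Everything else is formal.

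With $G$ connected linear, I choose a maximal torus $T_{\max}\supseteq T_{\mathrm{std}}$. By conjugacy of maximal tori, $H'$ lies in some maximal torus $T_1$, and $T_1=gT_{\max}g^{-1}$ for some $g\in G$. Then $g^{-1}H'g\subseteq T_{\max}$. The conclusion I actually want is that the conjugated torus itself contains $T_{\mathrm{std}}$. For this I use maximality of $H'$. Because $H'$ acts with a dense orbit and effectively, its centralizer in $G$ is $H'$ itself: an automorphism commuting with $H'$ is determined by where it sends one point of the dense orbit, and it preserves $O$. Hence $H'$ is a maximal torus of $G$. Therefore $g^{-1}H'g=T_{\max}\supseteq T_{\mathrm{std}}$, which completes the proof.
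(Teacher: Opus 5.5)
Your proof is correct and has the same structure as the paper's: show that $\Aut^0(X)$ is a connected linear algebraic group via the linearity of connected automorphism groups of complete toric varieties, show that the image of $H$ is a maximal torus, and conjugate it onto a maximal torus chosen to contain $T_{\mathrm{std}}$. The differences are local. You pass to the normalization instead of quoting normality of positroid varieties; note that Demazure and Cox cover only the smooth and simplicial cases, so you need the later extension to arbitrary complete toric varieties that the paper cites, or your Picard argument, which works here because $X$ is normal and rational and hence $H^1(X,\mathcal O_X)=0$. Your self-centralizing argument is a self-contained replacement for the paper's terse ``by dimension'' maximality step.
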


\begin{proof}
Replacing $H$ by its image in $\Aut(X)$, we may assume that
$H$ acts effectively. Since $H$ is connected, it is contained in
$\Aut^0(X)$. By \cite[Theorem~5.13]{KLS13}, $X$ is normal, and
$X$ is projective because it is closed in the Grassmannian. Since
$X$ is toric with respect to $H$, it is a complete toric variety.

By \cite[Proposition~11]{Demazure}, as extended to arbitrary complete toric varieties in \cite[Theorem~2]{LLA}, $\Aut^0(X)$ is a connected
linear algebraic group. Further, $H$ is a maximal torus of
$\Aut^0(X)$ by dimension. The standard diagonal torus preserves the
Pl\"ucker-coordinate ideal of $X$ by
\cite[Theorem~5.15]{KLS13}. Its image is connected, and hence
lies in $\Aut^0(X)$; we continue to denote this image by
$T_{\mathrm{std}}$.

Choose a maximal torus $L\subseteq\Aut^0(X)$ containing
$T_{\mathrm{std}}$. Since maximal tori in a connected linear
algebraic group are conjugate, $H$ and $L$ are conjugate. The
$L$-action therefore also has a dense open orbit.
\end{proof}

We now turn to the matroid-theoretic input for the obstruction. For a rank-$k$ matroid $\mathcal M$ on $[n]$ and a field
$\mathbb F$, its \emph{matroid stratum} is
\[
S_{\mathcal M}(\mathbb F)
=
\{V\in\Gr_{\mathbb F}(k,n):\mathcal M_V=\mathcal M\}.
\]
We use the following result of Quail and Rombach.

\begin{theorem}[Quail--Rombach
{\cite[Corollary~4.1 and Theorem~4.3]{QR}}]
\label{thm:quail-rombach}
Let $\mathcal P$ be a positroid.
\begin{enumerate}[label=\roman*)]
\item $\mathcal P$ is regular if and only if it is binary.
\item If $\mathcal P$ is regular, then
\[
\Pi_{\mathcal P}^{\circ}(\mathbb F)
=
S_{\mathcal P}(\mathbb F)
\]
for every field $\mathbb F$.
\end{enumerate}
\end{theorem}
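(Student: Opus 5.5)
For part (i), one direction is immediate: a regular matroid is representable over every field, in particular over $\mathbb F_2$, so it is binary. For the converse I would use Tutte's excluded-minor theorem: a binary matroid is regular if and only if it has no minor isomorphic to the Fano plane $F_7$ or its dual $F_7^*$. Positroids are closed under taking minors, and every positroid is representable over $\mathbb R$. Both $F_7$ and $F_7^*$ are representable only in characteristic $2$, so neither is a positroid. Hence a binary positroid has no $F_7$ or $F_7^*$ minor, and is therefore regular.

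For part (ii), one inclusion is formal. By Knutson--Lam--Speyer, $\Pi_{\mathcal P}^{\circ}$ is the locus in $\Pi_{\mathcal P}$ where the Pl\"ucker coordinates indexed by the Grassmann necklace of $\mathcal P$ do not vanish. Since the necklace sets are bases of $\mathcal P$, we get $S_{\mathcal P}(\mathbb F)\subseteq \Pi_{\mathcal P}^{\circ}(\mathbb F)$. The content is the reverse inclusion: every $V\in\Pi^{\circ}_{\mathcal P}(\mathbb F)$ has $\mathcal M_V=\mathcal P$, and not merely $\mathcal B(\mathcal M_V)\subseteq\mathcal B(\mathcal P)$ with the same necklace.

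My plan is a dimension and parametrization argument.
\begin{enumerate}[label=\arabic*.]
\item Using the Le-diagram (Deodhar) parametrization, which is defined over $\mathbb Z$, identify $\Pi^{\circ}_{\mathcal P}(\mathbb F)$ with $(\mathbb F^{\times})^{d}$ for every field $\mathbb F$. Here $d$ is the number of pluses in the Le-diagram.
\item Show that $d=n-c(\mathcal P)$ when $\mathcal P$ is regular, where $c(\mathcal P)$ is the number of connected components. Over $\mathbb R$ or $\mathbb C$, $S_{\mathcal P}$ is nonempty and open in the irreducible variety $\Pi^\circ_{\mathcal P}$. Because regular matroids are projectively uniquely representable, $S_{\mathcal P}$ is a single orbit of the diagonal torus. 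That orbit has dimension $n-c(\mathcal P)$, since the stabilizer is the torus of scalars on each connected component.
\item Pass to an arbitrary field. Fix a totally unimodular representation $V_0$ of $\mathcal P$ and track the torus action in Le-coordinates. I would show that the diagonal torus $(\mathbb F^\times)^n$ acts on $(\mathbb F^\times)^d$ through a monomial map. This map is surjective on character lattices modulo the component scalars, because over $\mathbb C$ it is an isomorphism onto $(\mathbb C^\times)^d$ modulo a finite group, and one needs unimodularity.
\item Conclude that $\Pi^\circ_{\mathcal P}(\mathbb F)$ is the single orbit $T(\mathbb F)\cdot V_0$. Every point of that orbit has matroid $\mathcal P$, so $\Pi^\circ_{\mathcal P}(\mathbb F)=S_{\mathcal P}(\mathbb F)$.
\end{enumerate}

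The main obstacle is step 3: showing that the monomial map of tori is an isomorphism over $\mathbb Z$, not just of finite degree. The degree-$1$ claim over $\mathbb C$ follows from projective uniqueness, but it may still hide $p$-torsion.

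If step 3 resists, I would argue combinatorially by induction on $n$ via deletion and contraction of a boundary element, using that $\Pi^\circ$ behaves well under these operations. The base case would be where $\mathcal P$ has no $U_{2,4}$ minor. There, each three-term Pl\"ucker relation has at most two nonzero terms, which forces the vanishing pattern of $V$ to be exactly that of $\mathcal P$.
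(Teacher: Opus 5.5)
The paper does not prove this statement; it imports it from Quail--Rombach. So the question is whether your argument stands on its own.

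\textbf{Part (i) and the easy inclusion.} Part (i) is correct: it is the standard Tutte argument. You do not even need minor-closure of positroids, since minors of real-representable matroids are real-representable. In part (ii), the inclusion $S_{\mathcal P}(\mathbb F)\subseteq\Pi^\circ_{\mathcal P}(\mathbb F)$ is fine, and so is the count $d=n-c(\mathcal P)$ in step 2.

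\textbf{The gap is step 1.} The Le-diagram map $(\mathbb F^\times)^d\to\Pi^\circ_{\mathcal P}(\mathbb F)$ is injective onto the top Deodhar piece, but it is not surjective in general. For example, $\Pi^\circ_{U_{2,4}}=\{p_{12}p_{23}p_{34}p_{14}\neq 0\}$ contains the row space of $\bigl(\begin{smallmatrix}1&0&1&0\\0&1&0&1\end{smallmatrix}\bigr)$, where $p_{13}=p_{24}=0$. It also has $(q-1)^2(q^2-q+1)\neq(q-1)^4$ points over $\mathbb F_q$.

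For regular $\mathcal P$ the identification $\Pi^\circ_{\mathcal P}(\mathbb F)\cong(\mathbb F^\times)^d$ is true, but only as a consequence of the theorem. Given your steps 2--3, it is equivalent to the statement being proved. The whole content is to exclude lower-dimensional pieces of $\Pi^\circ_{\mathcal P}$: points whose matroid has strictly fewer bases than $\mathcal P$ but the same Grassmann necklace. Step 2 cannot see such points, because they lie in the complement of the dense orbit, and step 1 simply assumes them away.

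\textbf{Your other steps.} The step you flag as the main obstacle is harmless. Over $\mathbb C$, the stabilizer of $V_0$ in $(\mathbb C^\times)^n$ is the connected torus of scalars on the connected components of $\mathcal P$. Hence the character map is injective with torsion-free cokernel, and no $p$-torsion can hide. Your fallback does not close the gap either. A regular positroid has no $U_{2,4}$ minor, so the proposed base case is the whole theorem. Knowing that one term of each three-term relation vanishes only tells you that the other two products vanish together. Propagating nonvanishing from the necklace sets to every basis of $\mathcal P$ is exactly the missing argument.

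\textbf{A possible repair.} One way to close the gap reuses your step 2 and avoids Le coordinates entirely.
\begin{enumerate}
\item Work over $K=\overline{\mathbb F}$. Since $\mathcal P$ is regular, the row space of your totally unimodular $V_0$ lies in $S_{\mathcal P}(K)$. By projective uniqueness, $S_{\mathcal P}(K)=T(K)\cdot V_0$ is a single orbit of dimension $n-c(\mathcal P)=d$.
\item This orbit is therefore dense in $\Pi_{\mathcal P}$, which is irreducible of dimension $d$ over $K$. So $\Pi_{\mathcal P}=\overline{T\cdot V_0}$.
\item By the Gelfand--Goresky--MacPherson--Serganova description of torus-orbit closures, every $V\in\Pi_{\mathcal P}$ has $\Gamma_{\mathcal M_V}$ equal to a face of $\Gamma_{\mathcal P}$.
\item No proper face contains all the necklace vertices $e_{I_a}$. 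A proper face lies on a cyclic-interval inequality $\sum_{i\in[a,b]}x_i\le r_{\mathcal P}([a,b])$ that is not tight on all of $\Gamma_{\mathcal P}$. The greedy basis $I_{b+1}$ meets $[a,b]$ in the minimum possible number $k-r_{\mathcal P}([b+1,a-1])$ of elements. Since the inequality is not tight everywhere, this minimum is strictly less than $r_{\mathcal P}([a,b])$, so $e_{I_{b+1}}$ is not on the face.
\end{enumerate}
So every $V\in\Pi^\circ_{\mathcal P}(K)$ has $\mathcal M_V=\mathcal P$. Since matroids do not change under field extension, the same holds for every $V\in\Pi^\circ_{\mathcal P}(\mathbb F)$.
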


In particular, the second item implies that if $\mathcal P$ is
regular and
\[
V_1,V_2\in
\Pi_{\mathcal P}^{\circ}(\mathbb F)
\subseteq \Gr_{\mathbb F}(k,n),
\]
then any full-rank matrices representing $V_1$ and $V_2$
both represent the matroid $\mathcal P$ over $\mathbb{F}$.

We next lift positroid minors to closed positroid subvarieties.
For $A\subseteq E$, write
\[
\mathbb C^A=\operatorname{span}\{e_i:i\in A\},
\qquad
e_A=\sum_{i\in A}e_i\in\mathbb R^E.
\]
For a matroid $\mathcal M$ on $E$, its base polytope is
\[
\Gamma_{\mathcal M}
=
\operatorname{conv}
\{e_B:B\in\mathcal B(\mathcal M)\}
\subseteq\mathbb R^E.
\]
\begin{lemma}\label{lem:minor-lift}
Let $\mathcal{M}$ be a rank-$k$ positroid on $E=[n].$ Let
$\mathcal{Q}$ be a rank-$r$ minor of $\mathcal{M},$ realized on a ground set $S\subseteq E$. Then there exist
a subset $C\subseteq E\setminus S$ and a positroid $\hat{\mathcal{Q}}$
on $E$ such that
the following hold:
\begin{enumerate}[label=\roman*)]
\item $|C|= k-r$;
\item viewing $\mathbb{R}^S$ as the corresponding coordinate subspace of $\mathbb{R}^E,$ $\Gamma_{\hat{\mathcal{Q}}}=e_C+\Gamma_{\mathcal{Q}}$ is a face
of $\Gamma_{\mathcal{M}}$;
\item $\Pi_{\hat{\mathcal{Q}}}\subseteq \Pi_{\mathcal{M}}$
scheme-theoretically;
\item the coordinate embedding
\begin{equation*}\begin{split}
\iota_C:\Gr(r,\mathbb{C}^S)&\longrightarrow \Gr(k,\mathbb{C}^E),\\
W&\longmapsto \mathbb{C}^C\oplus W
\end{split}\end{equation*}

restricts to an isomorphism
$$
\Pi_{\mathcal{Q}}\xrightarrow{\sim}
\Pi_{\hat{\mathcal{Q}}}.
$$
\end{enumerate}
\end{lemma}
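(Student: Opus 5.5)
We write the minor as $\mathcal Q=(\mathcal M/C)\setminus D$ with $C,D\subseteq E$ disjoint and $S=E\setminus(C\cup D)$. The plan is to normalize this presentation and then read off the four items from the face structure of the base polytope.

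\textbf{Normalizing $C$.} We first arrange that $C$ is independent and $D$ is coindependent in $\mathcal M$. This is the standard normalization of minors: any minor can be rewritten this way, by moving dependent elements of $C$ into $D$ and coindependent-violating elements of $D$ into $C$, without changing the minor. Then $\operatorname{rank}(\mathcal M/C)=k-|C|$, and deleting the coindependent set $D$ preserves rank. Hence $r=k-|C|$, which gives (i). In this situation the bases of $\mathcal Q$ are exactly the sets $B\setminus C$, where $B$ ranges over the bases of $\mathcal M$ with $C\subseteq B$ and $B\cap D=\emptyset$.

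\textbf{The face in (ii).} Consider the linear functional $\lambda=\sum_{i\in C}x_i-\sum_{j\in D}x_j$ on $\mathbb R^E$. On the vertices $e_B$ of $\Gamma_{\mathcal M}$ it is maximized, with value $|C|$, exactly when $C\subseteq B$ and $B\cap D=\emptyset$. Such bases exist, since the coindependence of $D$ in $\mathcal M/C$ is automatic after normalizing. So the face of $\Gamma_{\mathcal M}$ cut out by $\lambda$ is the convex hull of the $e_B$ for these $B$, which equals $e_C+\Gamma_{\mathcal Q}$ by the description of the bases above. Define $\hat{\mathcal Q}$ to be the matroid with bases $\{C\cup B':B'\in\mathcal B(\mathcal Q)\}$, i.e.\ $\mathcal Q$ plus loops on $D$ and coloops on $C$. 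It is a positroid because faces of positroid polytopes are positroid polytopes (Ardila--Rinc\'on--Williams). Alternatively, minors of positroids are positroids, and adding loops and coloops preserves positroids.

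\textbf{Items (iii) and (iv).} Since $\mathcal B(\hat{\mathcal Q})\subseteq\mathcal B(\mathcal M)$, the generators $p_I$ with $I\notin\mathcal B(\mathcal M)$ are among those with $I\notin\mathcal B(\hat{\mathcal Q})$. By \cite[Theorem~5.15]{KLS13} these Pl\"ucker coordinates generate the defining ideals, so (iii) follows. For (iv), $\iota_C$ is a closed embedding identifying $\Gr(r,\mathbb C^S)$ with the locus $\{p_I=0 : I\not\supseteq C \text{ or } I\cap D\ne\emptyset\}$. Indeed, vanishing of all such $p_I$ forces $V\supseteq\mathbb C^C$ and $V\subseteq\mathbb C^{C\cup S}$. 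Under this identification $p_{C\cup J}(\iota_C W)=\pm p_J(W)$. Therefore the ideal of $\Pi_{\hat{\mathcal Q}}$, restricted to this locus, pulls back to the ideal generated by the $p_J$ with $J\notin\mathcal B(\mathcal Q)$, which is the ideal of $\Pi_{\mathcal Q}$, again by \cite[Theorem~5.15]{KLS13}.

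\textbf{Main obstacle.} The scheme-theoretic identification in (iv) is the delicate step: one must check that the ideal generated by the non-basis Pl\"ucker coordinates of $\hat{\mathcal Q}$ already contains the ideal of the image of $\iota_C$. It does, since all $p_I$ with $I\not\supseteq C$ or $I\cap D\neq\emptyset$ are non-bases of $\hat{\mathcal Q}$. The remaining point is to state the normalization of $C$ and $D$ carefully so that (i) holds exactly.
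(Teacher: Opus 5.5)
Your proposal is correct and is essentially the paper's proof. The paper gets the same normalized presentation ($C$ independent, $D=E\setminus(C\sqcup S)$ coindependent in $\mathcal M/C$) from the Scum Theorem by taking $C$ to be a basis of the scum set. Its minimized weight $\omega$ (equal to $0,1,2$ on $C,S,D$) satisfies $\omega(e_B)=k-\lambda(e_B)$ for every basis $B$, so it cuts out exactly your face. Items (iii) and (iv) are then handled as you do, via the non-basis Pl\"ucker generators and the coordinate embedding $\iota_C$.
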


\begin{proof} Positroids are closed under taking minors  \cite[Proposition~3.5]{ARW16}, so $\mathcal{Q}$ is a positroid. By the Scum Theorem \cite[Theorem~3.3.1]{Oxley}, there is a subset $Z\subseteq E\setminus S$ such that $(\mathcal{M}/Z)|_S = \mathcal{Q}$ and $r(\mathcal{M}/Z) = r.$ Let $C$ be a basis of $Z$ in $\mathcal{M}.$ Then $C$ is independent and, by the contraction-rank formula \cite[Proposition~3.1.6]{Oxley}, $|C| = k-r.$ The elements of $Z\setminus C$ are loops in $M/C,$ so restricting to $S$ gives $$(M/C)|_S=\mathcal{Q}_.$$

Let $D = E\setminus(C\sqcup S),$ and define
\begin{equation*}\begin{split}
\omega:\mathbb{R}^{E}&\longrightarrow\mathbb{R},\\
e_i&\longmapsto 0 \text{ if } i\in C,\\
e_i&\longmapsto 1 \text{ if } i\in S,\\
e_i&\longmapsto 2 \text{ if } i\in D.
\end{split}
\end{equation*}
For a basis $B,$ $\omega(e_B)\geq r$ and
equality holds exactly when $B=C\sqcup I$, where $I\subseteq S$ is
a basis of $\mathcal{Q}.$ Indeed, if $c=|B\cap C|,$ $s=|B\cap S|,$ and
$d=|B\cap D|,$ then
$$\omega(e_B)=s+2d=k-c+d\geq k-|C|=r.$$ Equality holds exactly
when $c=|C|$ and $d=0$. In this case, $B=C\sqcup I$ for some $I\subseteq S,$ and $I$ is a basis of $(\mathcal{M}/ C)|_{S}=\mathcal{Q}.$

Conversely, if $I$ is a basis of
$\mathcal Q=(\mathcal M/C)|_S$, then $C\sqcup I$ is a basis of
$\mathcal M$, and hence $\omega(e_{C\sqcup I})=r$. Thus, the minimizing face of $\Gamma_\mathcal{M}$ is $$F=e_C+\Gamma_\mathcal{Q}.$$ By \cite[Corollary~5.4]{ARW16}, $F$ is the base polytope of a positroid $\hat{\mathcal{Q}}$ on $E,$ so we can call it $\Gamma_{\hat{\mathcal{Q}}},$ with
$$
\mathcal{B}(\hat{\mathcal{Q}})=
\{C\sqcup I:I\in\mathcal{B}(\mathcal{Q})\}.
$$
Since $\mathcal{B}(\hat{\mathcal{Q}})\subseteq \mathcal{B}(\mathcal{M}),$ every nonbasis of $\mathcal{M}$ is a nonbasis of $\hat{\mathcal{Q}}.$ Therefore, Theorem 5.15 of \cite{KLS13} gives
$$
I(\Pi_{\mathcal M})\subseteq I(\Pi_{\widehat{\mathcal Q}}),
$$
and hence $\Pi_{\widehat{\mathcal Q}}\subseteq\Pi_{\mathcal M}$ scheme-theoretically. 

Finally, the equations $$p_J=0 \,\,\,\,\,\, \text{ whenever }C\nsubseteq J\text{ or }J\cap D\neq\emptyset$$ cut out the image of \begin{equation*}\begin{split}\iota_C:\Gr(r,\mathbb{C}^S)&\rightarrow\Gr(k,\mathbb{C}^E)\\ W&\mapsto\mathbb{C}^C\oplus W.\end{split}\end{equation*} Indeed, these equations say that the corresponding $k$-plane
contains $\mathbb C^C$ and is contained in
$\mathbb C^{C\sqcup S}$. On this image, $$
p_{C\sqcup I}(\iota_C(W))=\pm p_I(W)
\quad\text{for }I\in\binom{S}{r},
$$ where the sign comes from reordering the elements of $C,$ followed by those of $I,$ into the fixed order on $(C\sqcup I).$ The remaining nonbasis Pl\"{u}cker equations for $\Pi_{\hat{\mathcal{Q}}}$ therefore pull back exactly to the nonbasis Pl\"{u}cker equations for $\Pi_\mathcal{Q}.$ Thus $\iota_C$ restricts to an isomorphism $$
\Pi_{\mathcal Q}\xrightarrow{\sim}\Pi_{\widehat{\mathcal Q}}.
$$
\end{proof}

Combining Lemma~\ref{lem:minor-lift} with Theorem~\ref{thm:quail-rombach} yields the following corollary.

\begin{corollary}\label{cor:gr24}
If $\mathcal M$ is a nonregular positroid, then
$\Pi_{\mathcal M}$ contains a closed subvariety isomorphic to
$\Gr(2,4)$.
\end{corollary}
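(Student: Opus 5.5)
By Theorem~\ref{thm:quail-rombach}(i), a nonregular positroid $\mathcal M$ is not binary. We then apply Tutte's excluded-minor characterization of binary matroids \cite{Oxley}: a matroid is binary if and only if it has no minor isomorphic to the uniform matroid $U_{2,4}$. So $\mathcal M$ has a minor $\mathcal Q\cong U_{2,4}$, of rank $r=2$, realized on some four-element ground set $S\subseteq E$.

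Next we apply Lemma~\ref{lem:minor-lift} to this minor. It produces a subset $C\subseteq E\setminus S$ with $|C|=k-2$ and a positroid $\hat{\mathcal Q}$ on $E$. By part~(iii), $\Pi_{\hat{\mathcal Q}}$ is a closed subscheme of $\Pi_{\mathcal M}$, being closed in $\Gr(k,n)$. By part~(iv), $\Pi_{\hat{\mathcal Q}}\cong\Pi_{\mathcal Q}\subseteq\Gr(2,\mathbb C^S)$.

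It remains to identify $\Pi_{\mathcal Q}$. Since $\mathcal Q$ is uniform, every $2$-subset of $S$ is a basis, so no Pl\"ucker coordinates are imposed to vanish. Hence $\Pi_{\mathcal Q}=V(\emptyset)=\Gr(2,\mathbb C^S)\cong\Gr(2,4)$, and its image $\Pi_{\hat{\mathcal Q}}$ is the desired closed subvariety.

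The only point needing care is that the isomorphism type of the minor does not depend on the labeling of $S$, so that $\mathcal Q$ is literally $U_{2,4}$ on $S$. This holds because uniformity is label-independent, and $U_{2,4}$ is itself a positroid. Everything else is a direct chaining of the cited results.
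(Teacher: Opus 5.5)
Your proposal is correct and follows the paper's proof essentially step for step. You use Theorem~\ref{thm:quail-rombach} to pass from nonregular to non-binary, Tutte's excluded-minor characterization to obtain a $U_{2,4}$-minor, Lemma~\ref{lem:minor-lift} to lift it to a closed positroid subvariety, and the fact that $U_{2,4}$ has no nonbases, so $\Pi_{U_{2,4}}=\Gr(2,4)$.
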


\begin{proof}
Since $\mathcal{M}$ is nonregular, it is not binary by
Theorem~\ref{thm:quail-rombach}. A matroid is binary if
and only if it has no $U_{2,4}$-minor
\cite[Theorem~6.5.4]{Oxley}. Thus, $\mathcal{M}$ has a $U_{2,4}$-minor.

Lemma~\ref{lem:minor-lift} implies that $\Pi_{\mathcal{M}}$ contains
a closed positroid subvariety isomorphic to
$\Pi_{U_{2,4}}.$ Since $U_{2,4}$ has no nonbases, $$\Pi_{U_{2,4}} = \Gr(2,4).$$ 
\end{proof}

We next show that, after applying Lemma~\ref{lem:normalization}, every closed
positroid subvariety is stable under the acting torus. For each
$a\in[n]$, let $F_\bullet^{(a)}$ be the cyclically shifted coordinate
flag
$$
0\subset \langle e_{\overline{a}}\rangle
 \subset \langle e_{\overline{a}},e_{\overline{a+1}}\rangle
 \subset \cdots
 \subset \langle e_{\overline{a}},e_{\overline{a+1}},\ldots,e_{\overline{a+n-1}}\rangle
 =\mathbb C^n,
$$
where this $\overline{a+j}$ notation simply means that the indices are taken modulo $n$. Choose a diagonal
one-parameter subgroup
$$
\lambda_a:\mathbb C^\times\longrightarrow T_{\mathrm{std}}
$$
that is strongly dominant for the Borel subgroup determined by
$F_\bullet^{(a)}$. Here strongly dominant means that its pairing
with every positive root is positive; such a cocharacter always
exists, and in type $A$ its diagonal weights may be chosen
strictly ordered compatibly with $F_\bullet^{(a)}$, so choosing such a subgroup poses no problem.

For $I\in\binom{[n]}{k}$, let
$V_I=\langle e_i:i\in I\rangle$. The \textit{Bia{\l}ynicki-Birula attracting cell}
$$
A_I^{(a)}
=
\left\{
V\in\operatorname{Gr}(k,n):
\lim_{t\to0}\lambda_a(t)V=V_I
\right\}
$$
is the Schubert cell through $V_I$ with respect to
$F_\bullet^{(a)}$; see \cite[Lemma~6.1]{BuchChaputPerrin}. The cyclically shifted
case follows from the standard case by conjugating by the
corresponding cyclic permutation matrix. By
\cite[Lemma~5.3]{KLS13}, every open positroid variety is an
intersection of such cells.

The following lemma is analogous to \cite[Theorem~3.1]{CanSaha}.

\begin{lemma}\label{lem:stability}
Let $X=\Pi_{\mathcal{M}}\subseteq \Gr(k,n)$ be a positroid
variety. Let $H\subseteq\Aut^0(X)$ be a torus containing
$T_{\mathrm{std}}.$ Then every closed positroid subvariety
$\Pi_{\mathcal{N}}\subseteq\Pi_{\mathcal{M}}$ is $H$-stable.
\end{lemma}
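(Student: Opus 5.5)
The strategy is the standard "commuting torus preserves Białynicki-Birula cells" argument, as in \cite[Theorem~3.1]{CanSaha}. Since $H$ is a torus containing $T_{\mathrm{std}}$, it is commutative, so every $h\in H$ commutes with each cocharacter $\lambda_a:\mathbb C^\times\to T_{\mathrm{std}}\subseteq H$. I will show that $H$ permutes the $T_{\mathrm{std}}$-fixed points of $X$, and hence permutes the attracting cells $A_I^{(a)}\cap X$. Then I will use connectedness of $H$ to conclude that each such cell, and hence each open positroid stratum, is fixed by $H$. Closed positroid subvarieties are closures of open positroid varieties, so they are $H$-stable.

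\textbf{Step 1: $H$ fixes every $T_{\mathrm{std}}$-fixed point of $X$.}
\begin{itemize}
\item The fixed locus $X^{T_{\mathrm{std}}}$ is the finite set $\{V_I: I\in\mathcal B(\mathcal M)\}$, since the coordinate subspaces are the only torus-fixed points of $\Gr(k,n)$.
\item Because $H$ commutes with $T_{\mathrm{std}}$, each $h\in H$ maps $X^{T_{\mathrm{std}}}$ to itself.
\item $H$ is connected and the set is finite, so $H$ acts trivially on it.
\end{itemize}

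\textbf{Step 2: $H$ preserves the relevant cells.} Take $V\in A_I^{(a)}\cap X$ and $h\in H$. Then
\[
\lim_{t\to0}\lambda_a(t)\,hV=\lim_{t\to 0} h\,\lambda_a(t)V = hV_I=V_I,
\]
using that $h$ is a continuous automorphism of $X$ and the limit exists in $X$ because $X$ is projective. Thus $hV\in A_I^{(a)}\cap X$, so each $A_I^{(a)}\cap X$ is $H$-stable. Note that the cells are only subsets of $\Gr(k,n)$, while $H$ acts only on $X$. This causes no problem: we only need the intersections with $X$, and the limit is computed inside $X$.

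\textbf{Step 3: open strata are $H$-stable.} By \cite[Lemma~5.3]{KLS13}, each open positroid variety $\Pi^\circ_{\mathcal N}$ with $\Pi_{\mathcal N}\subseteq X$ is an intersection $\bigcap_a A^{(a)}_{I_a}$ of such cells. I need to check that each $\Pi^\circ_{\mathcal N}$ contained in $X$ equals $\bigcap_a \bigl(A^{(a)}_{I_a}\cap X\bigr)$. This holds because $\Pi^\circ_{\mathcal N}\subseteq X$ and the cells are intersected with $X$. Hence $\Pi^\circ_{\mathcal N}$ is $H$-stable.

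\textbf{Step 4: closed strata are $H$-stable.} Since $H$ acts by homeomorphisms of $X$, the closure $\Pi_{\mathcal N}=\overline{\Pi^\circ_{\mathcal N}}$ is $H$-stable as a set. Both varieties are reduced, since $\Pi_{\mathcal N}$ is reduced by \cite[Theorem~5.15]{KLS13}, so set-theoretic stability suffices.

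The main obstacle is Step 3. One must identify exactly which Grassmann necklace indexes $\Pi^\circ_{\mathcal N}$, and make sure the cited description of open positroid varieties as intersections of the cyclically shifted Schubert cells $A^{(a)}_{I_a}$ matches these Białynicki-Birula cells. This includes checking the direction of the limit (attracting versus repelling, i.e.\ Schubert versus opposite Schubert cells). If the orientation is off, I will replace $\lambda_a$ by $\lambda_a^{-1}$; the argument is insensitive to this choice.
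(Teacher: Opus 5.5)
Your proposal is correct and follows the paper's proof essentially step by step. Commutativity and connectedness of $H$ force it to fix the finitely many coordinate-plane fixed points. It therefore preserves each attracting cell $A_I^{(a)}\cap X$, hence each open stratum via \cite[Lemma~5.3]{KLS13}, and hence their closures. The only cosmetic difference is that you take the finite fixed set to be $X^{T_{\mathrm{std}}}$, where the paper uses $X^{\lambda_a}$.
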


\begin{proof}
Since $H$ is a torus, it is abelian and therefore centralizes
the image of each $\lambda_a$. The fixed-point set
$X^{\lambda_a}$ is finite, since
\[
X^{\lambda_a}\subseteq\Gr(k,n)^{\lambda_a}
\]
and $\Gr(k,n)^{\lambda_a}$ is the finite set of coordinate
$k$-planes. Since $H$ is connected and preserves
$X^{\lambda_a}$, it fixes $X^{\lambda_a}$ pointwise.

Fix $a\in[n]$ and $I\in\binom{[n]}{k}$ such that the cell
$A^{(a)}_I$ occurs in the intersection that defines
$\Pi_{\mathcal{N}}^\circ,$ as given in \cite[Lemma~5.3]{KLS13}. Let
$$
X^+_{I,a}=X\cap A^{(a)}_I.
$$
If $x\in X^+_{I,a},$ then
$\lim_{t\rightarrow 0}\lambda_a(t)x=V_I.$ Further, since $\lambda_a(\mathbb{C}^\times) \subseteq T_{\text{std}},$ we know that $\lambda_a(t)x\in X$ for all $t\neq 0.$ Since $X$ is closed, $V_I\in X.$ Moreover, as $V_I$ is a coordinate $k$-plane, it is fixed by $\lambda_a;$ thus, $V_I\in X^{\lambda_a}$. So, if we let $g\in H,$ $gV_I=V_I.$ Therefore,
\begin{equation*}\begin{split}
\lim_{t\rightarrow 0}\lambda_a(t)(gx)&=g\left(\lim_{t\rightarrow 0}\lambda_a(t)x\right)\\
&=gV_I\\
&=V_I.
\end{split}\end{equation*}

Thus, $gx\in X^+_{I,a}$, so $X^+_{I,a}$ is $H$-stable.
Each corresponding intersection $X\cap A_I^{(a)}$ is therefore
$H$-stable. Since $\Pi_{\mathcal N}^{\circ}$ is the intersection
of these sets, it is $H$-stable, and so is its closure
$\Pi_{\mathcal N}$.

\end{proof}

Combining Lemma~\ref{lem:normalization},
Corollary~\ref{cor:gr24}, and Lemma~\ref{lem:stability}
gives the following corollary.

\begin{corollary}\label{cor:regular}
If the positroid variety $\Pi_\mathcal{M}$ is toric with respect to some algebraic torus, then $\mathcal{M}$ is regular.
\end{corollary}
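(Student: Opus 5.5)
We argue by contraposition. Suppose $\mathcal M$ is nonregular and, for contradiction, that $X=\Pi_{\mathcal M}$ is toric with respect to some algebraic torus $H$.

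\textbf{Normalizing the torus.} By Lemma~\ref{lem:normalization}, after replacing $H$ by its image in $\Aut(X)$ and conjugating inside $\Aut^0(X)$, we may assume that $H\subseteq\Aut^0(X)$ contains $T_{\mathrm{std}}$. Conjugation carries orbits to orbits, so the conjugated torus still has a dense open orbit on $X$.

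\textbf{A stable copy of $\Gr(2,4)$.} Corollary~\ref{cor:gr24} gives a closed subvariety $Y\subseteq X$ with $Y\cong\Gr(2,4)$. Its construction through Lemma~\ref{lem:minor-lift} shows that $Y=\Pi_{\hat{\mathcal Q}}$ is a closed positroid subvariety of $\Pi_{\mathcal M}$, where $\mathcal Q=U_{2,4}$. Lemma~\ref{lem:stability} then shows that $Y$ is $H$-stable. So $H$ acts on $Y$ through a homomorphism $H\to\Aut(Y)\cong\Aut(\Gr(2,4))$, and the image $H_Y$ is a torus.

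\textbf{$Y$ is toric under $H_Y$.} Here we use that a torus acting on a toric variety with a dense orbit has only finitely many orbits. This holds because $X$ is a normal toric variety for the effective torus $H$, and orbits of a toric variety correspond to the cones of its fan. Since $Y$ is closed, irreducible and $H$-stable, it is a union of $H$-orbits, hence of finitely many of them. By irreducibility one of these orbits is dense in $Y$, and it is open because orbits are locally closed. Hence $Y$ is toric with respect to $H_Y$, so $\dim H_Y\ge\dim Y=4$.

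\textbf{The contradiction.} The group $\Aut(\Gr(2,4))^0=\mathrm{PGL}_4$ has rank $3$, so any torus inside it has dimension at most $3<4$. (The full group $\Aut(\Gr(2,4))$ contains $\mathrm{PGL}_4$ with index $2$, the extra component coming from duality, but $H_Y$ is connected and so lies in the identity component.) This contradicts $\dim H_Y\ge 4$. Therefore $\Pi_{\mathcal M}$ is not toric with respect to any algebraic torus, proving the corollary.

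The main point needing care is the third step: passing from ``$X$ has a dense $H$-orbit'' to ``the stable subvariety $Y$ has a dense orbit.'' It relies on the finiteness of orbits for a torus acting on a normal variety with a dense orbit. This is standard for toric varieties via the orbit–cone correspondence, once we note that $X$ is normal by \cite[Theorem~5.13]{KLS13} and that the acting torus, after passing to its image, acts effectively.
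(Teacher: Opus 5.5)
Your proof is correct and follows the paper's argument: normalize the torus via Lemma~\ref{lem:normalization}, get an $H$-stable positroid copy of $\Gr(2,4)$ from Corollary~\ref{cor:gr24} and Lemma~\ref{lem:stability}, conclude that this copy is toric, and contradict the fact that $\mathrm{PGL}_4$ has rank $3$. Your finite-orbit argument and your remark that $H_Y$ lies in the identity component of $\Aut(\Gr(2,4))$ simply make explicit two steps that the paper covers by citation.
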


\begin{proof}
Let $H$ be an algebraic torus for which $\Pi_{\mathcal M}$ is
toric. Replacing $H$ by its image in $\Aut(\Pi_{\mathcal{M}}),$ and then by a conjugate as in Lemma~\ref{lem:normalization}, we may assume that
$T_{\mathrm{std}}\subseteq H$.

Suppose that $\mathcal M$ is not regular. By
Corollary~\ref{cor:gr24}, $\Pi_{\mathcal M}$ contains a closed
positroid subvariety
\[
Y\cong\Gr(2,4).
\]
By Lemma~\ref{lem:stability}, $Y$ is $H$-stable. Every
irreducible closed $H$-stable subvariety of a toric $H$-variety
is toric with respect to the effective quotient of $H$ acting
on it; see \cite[Section~3.1]{Fulton}. Thus $Y$ is toric.

This is impossible, as an effective torus with a dense orbit on
$\Gr(2,4)$ would have dimension
\[
\dim\Gr(2,4)=4,
\]
whereas
\[
\Aut^0(\Gr(2,4))\cong \mathrm{PGL}_4(\mathbb{C})
\] has maximal tori of dimension $3$ \cite[Theorem~1.1]{Cowen}.
\end{proof}

We now use projective uniqueness of representations of regular matroids in order to show that regularity is equivalent to toricity with respect to the standard torus. 

Since every regular matroid is binary, {\cite[Proposition~6.6.5]{Oxley}} gives the following. 
\begin{prop}
\label{prop:projective-equivalence}
Let $\mathcal M$ be a regular matroid and let $\mathbb F$ be a
field. Then any two full-rank matrix representations of
$\mathcal M$ over $\mathbb F$ are projectively equivalent.
That is, one can be obtained from the other by row operations
and nonzero column scaling.
\end{prop}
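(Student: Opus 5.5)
The statement is cited from \cite[Proposition~6.6.5]{Oxley}, which says that binary matroids are uniquely representable over every field, up to projective equivalence. The plan is therefore to reduce to that result and not reprove it. The only work is to check that its hypotheses hold and that its notion of projective equivalence matches the one stated here.

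First, a regular matroid is representable over $\mathbb F_2$ by definition, so it is binary. If $\mathcal M$ is not representable over $\mathbb F$, there is nothing to prove. But $\mathcal M$ is regular, so it is representable over every $\mathbb F$, and the statement is nonvacuous.

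Second, Oxley's notion of projective equivalence allows left multiplication by an invertible matrix and nonzero column scaling. In general it also allows applying a field automorphism, and the reduction must make sure that no field automorphism is needed. For binary matroids this holds, and a short argument explains why. Fix a basis $B$ and row-reduce both representations to the standard form $[I_k\mid A]$ and $[I_k\mid A']$, with the columns indexed by $B$ placed first. The supports of $A$ and $A'$ are forced by the fundamental circuits of $\mathcal M$ with respect to $B$, so $A$ and $A'$ have the same support. Choose a spanning forest of the bipartite support graph of $A$. By scaling rows and columns (row scaling can be absorbed into row operations followed by column rescaling of $B$), we may normalize the entries on that forest to $1$ in both matrices. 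Brylawski--Lucas / Oxley's argument then shows that every remaining entry is determined, up to the field, by the matroid. For a binary matroid each remaining nonzero entry is forced to equal $1$ after normalization: a $2\times2$ submatrix with all entries nonzero would produce a $U_{2,4}$-minor, and longer cycles reduce to this case by induction on cycle length. Hence $A=A'$, and the two representations differ only by row operations and column scaling.

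The main obstacle is that last forcing step. If I were writing it out, I would induct on the length of a shortest cycle in the support graph that contains a non-normalized entry. The base case is a $4$-cycle, where a nonzero $2\times2$ minor that should vanish, or vice versa, determines the entry. For longer chordless cycles, the relevant circuit condition comes from the minor being zero or nonzero in the corresponding $k\times k$ submatrix, and this determines the entry; in the binary case it equals $1$. In practice, though, I would simply cite \cite[Proposition~6.6.5]{Oxley} as the paper does, together with the observation that regular implies binary.
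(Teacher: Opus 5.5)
Your actual proof (regular implies binary, then cite \cite[Proposition~6.6.5]{Oxley}) is exactly the paper's argument, and it is complete on its own.

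The supplementary sketch, however, has a false key step. It should not be offered as the justification that no field automorphism is needed.

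After you normalize a spanning forest of the support graph to $1$, the remaining entries are not forced to equal $1$ when $\operatorname{char}\mathbb F\neq 2$. Take $M(K_4)$ with the star basis $\{1,2,3\}$, so that $4,5,6$ form the opposite triangle. The support graph of $A$ is a $6$-cycle, and normalizing five entries gives $A=\begin{pmatrix}1&1&0\\1&0&1\\0&1&x\end{pmatrix}$. Since $\{4,5,6\}$ is a circuit, $0=\det A=-1-x$, so $x=-1$.

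Two related claims also fail:
\begin{itemize}
\item A $2\times2$ block with all entries nonzero yields a $U_{2,4}$-minor only when it is nonsingular. A parallel pair gives a singular such block in a binary matroid.
\item Longer cycles do not reduce to $4$-cycles by induction.
\end{itemize}

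The correct step goes as follows.
\begin{itemize}
\item Take a disagreeing entry $e$ and a shortest cycle $C$ in the support graph on which $e$ is the only disagreement. Such a cycle exists: use the fundamental cycle of $e$ with respect to the forest.
\item Minimality makes $C$ chordless. A chord would split $C$ into a shorter cycle that again has exactly one disagreeing edge. That is the half through $e$ if the chord agrees, and the other half if it does not.
\item Hence only the two alternating matchings of $C$ contribute to the determinant of the corresponding square submatrix. Over $\mathbb F_2$ this determinant is $1+1=0$.
\item Let $R\subseteq B$ and $J$ be the row and column labels of $C$. Then $(B\setminus R)\cup J$ is dependent in $\mathcal M$, so the determinant must also vanish over $\mathbb F$ for both matrices.
\item This vanishing pins down the entry at $e$ from the other, agreeing, entries of $C$, though not necessarily as $1$. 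That contradicts the disagreement at $e$.
\end{itemize}
This uniqueness, not the value $1$, is what gives $A=A'$. Since the argument never uses a field automorphism, it also settles your concern.
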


Together with Corollary~\ref{cor:regular}, Theorem~\ref{thm:quail-rombach} and
Proposition~\ref{prop:projective-equivalence} give the following characterization.

\begin{theorem}\label{thm:standard-toric}
The positroid variety $\Pi_{\mathcal M}$ is toric with respect
to the standard torus if and only if $\mathcal M$ is regular.
\end{theorem}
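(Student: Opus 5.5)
The forward direction is immediate from Corollary~\ref{cor:regular}: if $\Pi_{\mathcal M}$ is toric with respect to $T_{\mathrm{std}}$, then it is toric with respect to some algebraic torus, so $\mathcal M$ is regular. The content is the converse. We will show that if $\mathcal M$ is regular, then $T_{\mathrm{std}}$ acts transitively on the open positroid variety $\Pi^\circ_{\mathcal M}$. Since $\Pi^\circ_{\mathcal M}$ is open and dense in the irreducible variety $\Pi_{\mathcal M}$, this makes $\Pi^\circ_{\mathcal M}$ a dense open $T_{\mathrm{std}}$-orbit. Hence the image of $T_{\mathrm{std}}$ in $\Aut(\Pi_{\mathcal M})$ has a dense open orbit, which is exactly the definition of being toric with respect to $T_{\mathrm{std}}$. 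Normality, needed for the stronger sense of ``toric variety,'' is already supplied by \cite[Theorem~5.13]{KLS13}.

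For transitivity, take $V_1,V_2\in\Pi^\circ_{\mathcal M}(\mathbb C)$ and choose full-rank $k\times n$ matrices $A_1,A_2$ whose row spaces are $V_1$ and $V_2$.
\begin{enumerate}[topsep=0pt,itemsep=0pt,parsep=0pt]
\item By Theorem~\ref{thm:quail-rombach}(ii) with $\mathbb F=\mathbb C$, both $V_1,V_2$ lie in $S_{\mathcal M}(\mathbb C)$. So, as noted after that theorem, $A_1$ and $A_2$ both represent $\mathcal M$ over $\mathbb C$.
\item By Proposition~\ref{prop:projective-equivalence}, there exist $g\in\mathrm{GL}_k(\mathbb C)$ and an invertible diagonal $t=\operatorname{diag}(t_1,\dots,t_n)$ with $A_2=gA_1t$.
\item Row operations do not change the row space. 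Column scaling by $t$ is precisely the action of the diagonal torus on $\Gr(k,n)$, with $V\mapsto V\cdot t$ in the row-space convention, or equivalently the action of $t$ on $V$ viewed as a subspace of $\mathbb C^n$. Hence $V_2=t\cdot V_1$.
\end{enumerate}
Thus $\Pi^\circ_{\mathcal M}$ is a single orbit of the diagonal torus, and therefore of its image $T_{\mathrm{std}}$.

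The remaining points are bookkeeping. The first is that $\Pi^\circ_{\mathcal M}$ is open and dense in $\Pi_{\mathcal M}$ and that $\Pi_{\mathcal M}$ is irreducible; both are standard facts from \cite{KLS13}, and the paper already states that $\Pi_{\mathcal M}$ is the closure of $\Pi^\circ_{\mathcal M}$. The second is that the $T_{\mathrm{std}}$-orbits on $\Pi_{\mathcal M}$ coincide with the orbits of its image in $\Aut(\Pi_{\mathcal M})$, which is tautological. The only substantive input is step~1, which requires the equality $\Pi^\circ_{\mathcal M}=S_{\mathcal M}$. For general positroids, $\Pi^\circ_{\mathcal M}$ can be strictly larger than the matroid stratum, and then projective uniqueness would not apply. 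Step~1 is therefore where regularity is genuinely used, via Quail--Rombach, so we do not expect any real obstacle beyond correctly invoking Theorem~\ref{thm:quail-rombach} and Proposition~\ref{prop:projective-equivalence}. Combined with Corollary~\ref{cor:regular}, this also yields all three equivalences of Theorem~\ref{thm:main}.
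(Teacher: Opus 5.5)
Your proof is correct and follows the paper's argument essentially step for step. The forward direction comes from Corollary~\ref{cor:regular}. For the converse, you use Quail--Rombach to get $\Pi^\circ_{\mathcal M}(\mathbb C)=S_{\mathcal M}(\mathbb C)$, and then projective uniqueness (Proposition~\ref{prop:projective-equivalence}) to place any two points of $\Pi^\circ_{\mathcal M}$ in a single $T_{\mathrm{std}}$-orbit.

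The one thing you leave implicit, which the paper states, is the reverse inclusion $T_{\mathrm{std}}\cdot V_1\subseteq S_{\mathcal M}(\mathbb C)=\Pi^\circ_{\mathcal M}(\mathbb C)$, needed for ``single orbit''. It holds because diagonal scaling multiplies each $p_I$ by a nonzero scalar. Your toricity conclusion survives even without it: an orbit containing a dense open set is dense and locally closed, hence open.
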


\begin{proof}
The forward implication is Corollary~\ref{cor:regular}.

For the reverse direction, suppose that $\mathcal{M}$ is regular. By Theorem~\ref{thm:quail-rombach},
$$\Pi_{\mathcal{M}}^\circ(\mathbb{C})=S_{\mathcal{M}}(\mathbb{C}).$$ In other words, every full-rank matrix representing some $V\in\Pi_{\mathcal{M}}^\circ(\mathbb{C})$ represents
$\mathcal{M}.$ Choose $V_0,V\in\Pi_{\mathcal M}^{\circ}(\mathbb C)$, and let
$A_0$ and $A$ be full-rank $k\times n$ matrices whose row spaces are
$V_0$ and $V$, respectively. Both matrices represent the same matroid
$\mathcal M$ over $\mathbb C$.

By Proposition~\ref{prop:projective-equivalence}, there exist
$g\in \mathrm{GL}_k(\mathbb{C})$ and a nonsingular diagonal matrix $d$
such that $A=gA_0d.$ Passing to row spaces gives
$V=d\cdot V_0.$ Thus,
$$\Pi_{\mathcal{M}}^\circ(\mathbb{C})\subseteq T_{\mathrm{std}}\cdot V_0.$$ Conversely, diagonal rescaling preserves which Pl\"ucker coordinates vanish, so
$$
T_{\mathrm{std}}\cdot V_0\subseteq S_{\mathcal M}(\mathbb C)=\Pi_{\mathcal M}^{\circ}(\mathbb{C}).
$$
Hence
$$
\Pi_{\mathcal M}^{\circ}(\mathbb{C})=T_{\mathrm{std}}\cdot V_0.
$$
Since $\Pi_{\mathcal M}^{\circ}$ is dense in $\Pi_{\mathcal M}$, the effective image of $T_{\mathrm{std}}$ has a dense open orbit in $\Pi_\mathcal{M}.$ Thus $\Pi_\mathcal{M}$ is toric with respect to the standard torus. 
\end{proof}

\begin{proof}[Proof of Theorem~\ref{thm:main}]
If $\Pi_{\mathcal M}$ is toric with respect to some algebraic
torus, then $\mathcal M$ is regular by
Corollary~\ref{cor:regular}. Theorem~\ref{thm:standard-toric}
then shows that $\Pi_{\mathcal M}$ is toric with respect to
$T_{\mathrm{std}}$.
\end{proof}

\section*{Acknowledgments}

I would like to thank Dave Anderson for helpful conversations, both in directing me toward this question and during my work on it. I also thank Melissa Sherman-Bennett for comments on an early draft.

\section*{AI Usage Statement}

ChatGPT-5.5 was used to do literature search, assist in proving Lemma 7, and proofread.

\end{document}